\newtheorem{definition}{Definition}[section]
\newtheorem{algorithm}{Algorithm}[section]
\newtheorem{theorem}{Theorem}[section]
\newtheorem{proposition}{Proposition}[section]
\newtheorem{corollary}{Corollary}[section]
\newtheorem{remark}{Remark}[section]
\def\A{{\bf A}}
\def\B{{\bf B}}
\def\D{{\bf D}}
\def\E{{\bf E}}
\def\F{{\bf F}}
\def\G{{\bf G}}
\def\H{{\bf H}}
\def\I{{\bf I}}
\def\J{{\bf J}}
\def\L{{\bf L}}
\def\M{{\bf M}}
\def\0{{\bf 0}}
\def\P{{\bf P}}
\def\Q{{\bf Q}}
\def\R{{\bf R}}
\def\S{{\bf S}}
\def\T{{\bf T}}
\def\U{{\bf U}}
\def\V{{\bf V}}
\def\W{{\bf W}}
\def\Z{{\bf Z}}
\def\b{{\bf b}}
\def\e{{\bf e}}
\def\m{{\bf m}}
\def\q{{\bf q}}
\def\s{{\bf s}}
\def\t{{\bf t}}
\def\u{{\bf u}}
\def\x{{\bf x}}
\def\y{{\bf y}}
\def\z{{\bf z}}
\def\Tr{{\rm T}}
\def\diag{{\rm diag}}
\newenvironment{proof}[1][Proof:]{\begin{trivlist} 
\item[\hskip \labelsep {\bfseries #1}]}{\end{trivlist}} 
\newcommand{\qed}{\nobreak \ifvmode \relax \else \ifdim\lastskip<1.5em \hskip-\lastskip \hskip1.5em plus0em minus0.5em \fi \nobreak \vrule height0.75em width0.5em depth0.25em\fi}
\title{An Efficient Algorithm for Periodic Riccati Equation for Spacecraft Attitude Control Using Magnetic Torques}
\author{Yaguang Yang\thanks{Instrumentation and Control Engineer, Office of Research, US NRC, Two White Flint North
11545 Rockville Pike, Rockville, MD 20852-2738. Phone: (301) 415-0655. Email: yaguang.yang@verizon.net.} }
\begin{document}

\maketitle

\begin{abstract}
Spacecraft attitude control using only magnetic torques is a
periodic time-varying system as the Earth magnetic field in
the spacecraft body frame changes periodically while the 
spacecraft circles around
the Earth. The optimal controller design therefore
involves the solutions of the periodic Riccati 
differential or algebraic equations. This paper proposes an
efficient algorithm for the periodic discrete-time 
Riccati equation arising from a linear periodic time-varying 
system $(\A,\B)$, which explores and 
utilizes the fact that $\A$ is time-invariant and only $\B$ is time-varying in the system, a special 
properties associated with the problem of spacecraft attitude 
control using only magnetic torques.
\end{abstract}
{\bf Keywords: Periodic Riccati equation, spacecraft, attitude control, magnetic torque.}

\section{Introduction}
SPACECRAFT attitude control using only magnetic torques has several attractive features, such as low cost, high reliability 
(without moving mechanical parts), and seamless implementation. Therefore, numerous research papers were focused on
the problem of spacecraft attitude control using only magnetic 
torques in the last twenty five years (see 
\cite{me89,pitte93,wisni97,psiaki01,la04,sl05,la06,yra07,plv10,cw10,rh11,zl11} and references therein).
Because the Earth's magnetic field in the spacecraft body frame is 
approximately a periodic function as the spacecraft circles
around the Earth, the controller
design should be based on a time-varying system. Therefore, state space model is a natural choice.

Some researchers \cite{la04,sl05,la06} proposed direct design 
methods using Lyapunov stabilization theory. These designs use the nonlinear periodic model.
The existence of the solutions for these designs implicitly 
depends on the controllability for the nonlinear time-varying 
system. Therefore, Bhat \cite{bhat05} investigated controllability 
of the nonlinear time-varying systems. However, the condition for
the controllability of the nonlinear time-varying systems
established in \cite{bhat05} is hard to be verified and is a sufficient condition.
In addition, there is no systematic method for the selection of  Lyapunov functions.
Moreover, these designs do no consider the closed-loop system performances
other than the stability.

A more realistic design strategy is to use linearized time-varying 
system models. The standard design methods for these models
such as linear quadratic regulator (LQR) 
\cite{me89,pitte93,wisni97,psiaki01,cw10,plv10,rh11} and 
$\H_{\infty}$ control \cite{zl11} are discussed. 
But two important issues were not addressed in these papers. 
First, the existence of the solutions of LQR and $\H_{\infty}$
control directly depends on the controllability (or a slightly weak condition 
named stabilizability) of the linear time-varying system
which was not established for the spacecraft attitude control 
system using only magnetic torques. Second, the features and 
the structure of the spacecraft attitude control using magnetic 
torque were not explored. Instead, algorithms designed 
for general linear time-varying systems were used for this very specific problem. 
Therefore, those algorithms are not optimized for this 
problem.

The first issue is recently addressed in \cite{yang15} in which 
the conditions for the controllability of spacecraft attitude 
control using only magnetic torques were established. The second 
issue is the focus of this paper, we will explore the features 
and structure of the problem and propose an efficient algorithm
for the design of spacecraft attitude control using only magnetic torques.

There are two popular types of models used in spacecraft control system designs.
Some of the designs adopted Euler angle models \cite{me89,pitte93,wisni97,psiaki01,cw10} 
but others used the reduced quaternion models \cite{plv10,rh11,zl11}.
We will adopt a reduced quaternion model because of the merits of
the reduced quaternion model discussed in \cite{yang10,yang12,yang14}.

The remainder of the paper is organized as follows. Section 2
provides a description of the linear time-varying model of the 
spacecraft attitude control system using only magnetic torque.
Section 3 derives the controller design algorithm for the linear 
time-varying system. Section 4 presents a simulation example to 
demonstrate the effectiveness and efficiency of the design 
algorithm. The conclusions are summarized in Section 5.

\section{Spacecraft Model}
The linearized continuous-time model for spacecraft attitude control using only magnetic torques
can be expressed in a reduced quaternion form. 
Let $\J$ be the inertia matrix of a spacecraft defined by
\begin{eqnarray}
\J =\left[   \begin{array}{ccc}
J_{11} & J_{12} & J_{13} \\
J_{21} & J_{22} & J_{23} \\
J_{31} & J_{32} & J_{33}   
\end{array} \right].
\label{inertia}
\end{eqnarray}
We will consider the nadir pointing spacecraft. Therefore, the attitude of the spacecraft is represented by the 
rotation of the spacecraft body frame relative to the local vertical and local horizontal 
(LVLH) frame.  Let 
${\omega}=[\omega_1,\omega_2, \omega_3]^{\Tr}$ be the 
body rate with respect to the LVLH frame represented in the body frame, 
$\omega_0$ be the orbit (and LVLH frame) rate with respect to 
the inertial frame, represented in the LVLH frame.
Let $\bar{\q}=[q_0, q_1, q_2, q_3]^{\Tr}=[q_0, \q^{\Tr}]^{\Tr}=
[\cos(\frac{\alpha}{2}), \hat{\e}^{\Tr}\sin(\frac{\alpha}{2})]^{\Tr}$
be the quaternion representing the rotation of the body frame relative to the LVLH frame, 
where $\hat{\e}$ is the unit length rotational axis and $\alpha$ is the rotation angle about $\hat{\e}$.  
The control torques generated by magnetic coils interacting with
the Earth's magnetic field is given by (see \cite{sidi97})
\[
\u=\m \times \b
\]
where the Earth's magnetic field in spacecraft coordinates, 
$\b(t)=[b_1(t),b_2(t),b_3(t)]^{\Tr}$, is computed using the 
spacecraft position, the spacecraft attitude, and a spherical harmonic model of the Earth's magnetic field \cite{wertz78}; and 
$\m=[m_1,m_2,m_3]^{\Tr}$ is the spacecraft magnetic coils' induced 
magnetic moment in the spacecraft coordinates.
The time-variation of the system is an approximate periodic function of $\b(t)=\b(t+P)$ where the orbital period
is given by \cite{sidi97}
\begin{equation}
P=\frac{2\pi}{\omega_0}=2\pi \sqrt{\frac{a^3}{GM}},
\label{period}
\end{equation}
where $a$ is the orbital radius (for circular orbit) and $GM=3.986005*10^{14}{m^3/s^2}$ \cite{wertz78}.
 This magnetic field $\b(t)$ can be 
approximately expressed as follows \cite{psiaki01}:
\begin{equation}
\left[  \begin{array}{c} 
b_1(t) \\
b_2(t) \\
b_3(t) 
\end{array}  \right] 
= \frac{\mu_f}{a^{3}}
\left[  \begin{array}{c} 
\cos(\omega_0t)\sin(i_m) \\
-\cos(i_m) \\
2\sin(\omega_0t)\sin(i_m) 
\end{array}  \right],
\label{field}
\end{equation}
where $i_m$ is the inclination of the spacecraft orbit with 
respect to the magnetic equator, $\mu_f=7.9 \times 10^{15}$
Wb-m is the field's dipole strength. The time $t=0$ is measured at the ascending-node
crossing of the magnetic equator. Then, the reduced
quaternion linear time-varying system is given as follows \cite{yang15}:
\begin{eqnarray}
\left[  \begin{array}{c} 
\dot{q}_1 \\
\dot{q}_2 \\
\dot{q}_3 \\
\dot{\omega}_1 \\
\dot{\omega}_2 \\ 
\dot{\omega}_3 
\end{array}  \right] 
&=& \left[  \begin{array}{cccccc} 
0 & 0 & 0 & .5 & 0 & 0 \\
0 & 0 & 0 & 0 & .5 & 0 \\
0 & 0 & 0 & 0 & 0 & .5 \\
f_{41} & 0 & 0 & 0 & 0 & f_{46} \\
0 & f_{52} & 0 & 0 & 0 & 0 \\ 
0 & 0 & f_{63} & f_{64} & 0 & 0 
\end{array}  \right] 
\left[  \begin{array}{c} 
{q}_1 \\
{q}_2 \\
{q}_3 \\
{\omega}_1 \\
{\omega}_2 \\ 
{\omega}_3 
\end{array}  \right]
+\left[  \begin{array}{ccc} 
0 & 0 & 0 \\
0 & 0 & 0  \\
0 & 0 & 0  \\
0 & \frac{b_3(t)}{J_{11}} & -\frac{b_2(t)}{J_{11}} \\
-\frac{b_3(t)}{J_{22}} & 0 & \frac{b_1(t)}{J_{22}} \\ 
\frac{b_2(t)}{J_{33}} & -\frac{b_1(t)}{J_{33}} & 0
\end{array}  \right] 
\left[  \begin{array}{c} 
{m}_1 \\
{m}_2 \\
{m}_3 
\end{array}  \right] \nonumber \\
& := & 
\left[ \begin{array}{cc}
\0_3 & \frac{1}{2} \I_3  \\ \boldsymbol{\Lambda}_1 & \boldsymbol{\Sigma}_1
\end{array} \right]
\left[ \begin{array}{c}
\q \\ \boldsymbol{\omega} 
\end{array} \right]
+ \left[ \begin{array}{c}
\0_3 \\ \B_2(t)
\end{array} \right] \m
=\A\x+\B(t) \m,
\label{varying}
\end{eqnarray}
where
\begin{eqnarray}
\A= \left[ \begin{array}{cc}
\0_3 & \frac{1}{2} \I_3  \\ \boldsymbol{\Lambda}_1 & \boldsymbol{\Sigma}_1
\end{array} \right],
\hspace{0.1in}
&
\B=\left[ \begin{array}{c}
\0_3 \\ \B_2(t)
\end{array} \right],
\label{AB}
\end{eqnarray}
\begin{equation}
\B_2(t)= \left[ \begin{array}{ccc}
0 & b_{42}(t) & b_{43}(t) \\
b_{51}(t) & 0 & b_{53}(t) \\ 
b_{61}(t) & b_{62}(t)  & 0
\end{array} \right],
\label{bt}
\end{equation}
\begin{eqnarray}
f_{41}=[8(J_{33}-J_{22})\omega_0^2]/J_{11} \\
f_{46}=(-J_{11}+J_{22}-J_{33})\omega_0/J_{11} \\
f_{64}=(J_{11}-J_{22}+J_{33})\omega_0/J_{33} \\
f_{52}=[6(J_{33}-J_{11})\omega_0^2]/J_{22} \\
f_{63}=[2(J_{11}-J_{22})\omega_0^2]/J_{33}
\label{para}
\end{eqnarray}
and 
\begin{eqnarray}
b_{42} (t) = \frac{2\mu_f}{a^3 J_{11}} \sin(i_m) \sin(\omega_0t) 
\\
b_{43} (t)  = \frac{\mu_f}{a^3 J_{11}} \cos(i_m) 
\\
b_{53} (t)  = \frac{\mu_f}{a^3 J_{22}} \sin(i_m) \cos(\omega_0t) 
\\
b_{51} (t)  = -\frac{2\mu_f}{a^3 J_{22}} \sin(i_m) \sin(\omega_0t) 
= -b_{42}\frac{J_{11}}{J_{22}} 
\\
b_{61} (t) =-\frac{\mu_f}{a^3 J_{33}} \cos(i_m) = -b_{43} \frac{J_{11}}{J_{33}}
\\
b_{62} (t) =-\frac{\mu_f}{a^3 J_{33}} \sin(i_m) \cos(\omega_0t)
=-b_{53}\frac{J_{22}}{J_{33}}.
\label{bij}
\end{eqnarray}
It is easy to verify that $\det{\A}=(\frac{1}{2})^3\det(\boldsymbol{\Lambda}_1)$ and $\A$ is nonsingular if $J_{11} \neq J_{22}$, $J_{11} \neq J_{33}$, and $J_{33} \neq J_{22}$.

Oftentimes, a spacecraft controller is implemented in a computer 
system which is a discrete system. Therefore, the following 
discrete model is used for the design in practical implementation:
\begin{eqnarray}
\x_{k+1}= \A_k \x_k +\B_k \m_k.
\label{discrete}
\end{eqnarray}
The system matrices $(\A_k, \B_k)$ in the discrete model can be derived from 
(\ref{varying}), (\ref{AB}), and (\ref{bt}) by different methods. Let $t_s$ be the sample time, we use the 
following formulations.
\begin{eqnarray}
\A_k= (\I+\A t_s), \hspace{0.1in}
\B_k = \B(k t_s) t_s.
\label{ABk}
\end{eqnarray}
Note that 
\[
\det(\I+\A t_s)=\det \left[ \begin{array}{ccc}
\I & & 0.5t_s \I \\ t_s \boldsymbol{\Lambda}_1 & & \I + t_s\boldsymbol{\Sigma}_1
\end{array} \right]
=\det \left[ \begin{array}{ccc}
\I & & 0.5t_s \I \\ \0_3 & & \I + t_s\boldsymbol{\Sigma}_1-0.5t_s^2 \boldsymbol{\Lambda}_1
\end{array} \right]
\]
which is invertiable as long as $t_s$ is selected small enough.
It is worthwhile to mention that in  both continuous-time and discrete-time models, the time-varying feature is
introduced by time-varying matrices $\B(t)$ or $\B_k$; the system matrices
$\A$ and $\A_k$ are constants and invertiable which are important for us to derive an efficient computational algorithm.

\section{Computational Algorithm for the LQR Design}
It is well-known that the LQR design relies on the solution of 
either the 
differential Riccati equation (for continuous-time systems) or 
the discrete Riccati equation (for discrete-time systems) 
\cite{lvs12}. If a system is periodic, such as (\ref{varying}) or
(\ref{discrete}), the LQR design relies on the periodic 
solution of the periodic Riccati equation \cite{bittanti91}. 
Our discussion about the computational algorithm is focused
on the solution of periodic discrete Riccati equation 
using the special properties of (\ref{discrete}), i.e., $\A_k$
is constant and invertiable for $\forall k$.

\subsection{Preliminary Results}
Let 
\begin{equation}
\L = \left[ \begin{array}{rclc}
\0  & & \I & \\
-\I & & \0 &
\end{array} \right] \in \R^{2n \times 2n},
\label{L}
\end{equation}
where $n$ is the dimension of $\A$ or $\A_k$ in general case and $n=6$ in (\ref{varying}) and
(\ref{discrete}). Note that $\L^{\Tr}=\L^{-1}=-\L$. Two types of matrices defined below are important to 
the solutions of the Riccati equations.
\begin{definition}[{\cite{laub79}}]
A matrix $\M  \in \R^{2n \times 2n}$ is Hamiltonian if $\L^{-1}\M^{\Tr}\L= -\M$. 
A matrix $\M  \in \R^{2n \times 2n}$ is symplectic if $\L^{-1}\M^{\Tr}\L= \M^{-1}$. 
\end{definition}
\begin{proposition}
If $\M_1$ and $\M_2$ are symplectic, then $\M_1 \M_2$ is symplectic.
\label{prop1}
\end{proposition}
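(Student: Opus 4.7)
The plan is to verify the defining identity $\L^{-1}\M^{\Tr}\L=\M^{-1}$ directly for the product $\M_1\M_2$, by reducing it to the same identity applied to each factor individually.

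First I would expand $(\M_1\M_2)^{\Tr}=\M_2^{\Tr}\M_1^{\Tr}$ and compute
\begin{equation*}
\L^{-1}(\M_1\M_2)^{\Tr}\L=\L^{-1}\M_2^{\Tr}\M_1^{\Tr}\L.
\end{equation*}
The key algebraic trick is to insert the identity $\I=\L\L^{-1}$ between the two transposes, producing
\begin{equation*}
\L^{-1}\M_2^{\Tr}\L\,\L^{-1}\M_1^{\Tr}\L=(\L^{-1}\M_2^{\Tr}\L)(\L^{-1}\M_1^{\Tr}\L).
\end{equation*}
Each of the two parenthesized factors is exactly the expression appearing in the definition of a symplectic matrix, so by hypothesis this equals $\M_2^{-1}\M_1^{-1}=(\M_1\M_2)^{-1}$, which is the required identity.

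There is really no obstacle here; the only thing to be careful about is that the manipulation implicitly uses $\M_1^{-1}$ and $\M_2^{-1}$, but the very definition $\L^{-1}\M^{\Tr}\L=\M^{-1}$ already presumes invertibility of each symplectic factor (and in any event $\L$ is invertible with $\L^{-1}=-\L$, so $\det(\M)^2=1$ and $\M$ is automatically invertible). Thus the proof is just the insert-the-identity computation above.
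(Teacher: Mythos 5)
Your proposal is correct and follows exactly the same route as the paper's proof: expand $(\M_1\M_2)^{\Tr}=\M_2^{\Tr}\M_1^{\Tr}$, insert $\I=\L\L^{-1}$ between the two transposes, and apply the defining identity to each factor to obtain $\M_2^{-1}\M_1^{-1}=(\M_1\M_2)^{-1}$. The extra remark about invertibility of symplectic matrices is a harmless (and correct) addition that the paper leaves implicit.
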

\begin{proof}
Since $\L^{-1}\M_1^{\Tr}\L=\M_1^{-1}$ and $\L^{-1}\M_2^{\Tr}\L=\M_2^{-1}$,
we have 
\[
\begin{array}{cc}
\L^{-1}(\M_1\M_2)^{\Tr}\L = \L^{-1}\M_2^{\Tr}\M_1^{\Tr}\L
=\L^{-1}\M_2^{\Tr}\L \L^{-1} \M_1^{\Tr}\L =\M_2^{-1}\M_1^{-1}=(\M_1\M_2)^{-1}.
\end{array}
\]
This concludes the proof.
\hfill \qed
\end{proof}
In the sequel, we use $\lambda(\M)$ or simply $\lambda$ for an eigenvalue of a matrix $\M$ and $\sigma(\M)$
for the set of all eigenvalues of $\M$. The following two theorems play essential roles.
\begin{theorem}[{\cite{vaughan70,laub72}}]
Let $\M  \in \R^{2n \times 2n}$ is Hamiltonian. Then $\lambda \in \sigma(\M)$ implies
$-\lambda \in \sigma(\M)$ with the same multiplicity. 
Let $\M  \in \R^{2n \times 2n}$ is symplectic. Then $\lambda \in \sigma(\M)$  implies
$\frac{1}{\lambda} \in \sigma(\M)$ with the same multiplicity. 
\label{thm1}
\end{theorem}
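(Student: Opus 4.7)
The plan is to handle both statements in parallel, exploiting the common idea that the defining identity $\L^{-1}\M^{\Tr}\L=-\M$ (Hamiltonian) or $\L^{-1}\M^{\Tr}\L=\M^{-1}$ (symplectic) is a similarity transformation, which forces the characteristic polynomial of $\M$ to have a symmetry that pairs the eigenvalues.

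First I would record the algebraic consequence that is really doing the work. Since $\L$ is invertible, the Hamiltonian identity rewrites as $\M^{\Tr}=-\L\M\L^{-1}$, so $\M^{\Tr}$ is similar to $-\M$; the symplectic identity rewrites as $\M^{\Tr}=\L\M^{-1}\L^{-1}$, so $\M^{\Tr}$ is similar to $\M^{-1}$. Because similar matrices share a characteristic polynomial, and because $\M$ and $\M^{\Tr}$ always do, we obtain in the two cases
\begin{equation*}
p_{\M}(\lambda)=p_{-\M}(\lambda)\quad\text{and}\quad p_{\M}(\lambda)=p_{\M^{-1}}(\lambda),
\end{equation*}
where $p_{\M}(\lambda):=\det(\lambda\I-\M)$.

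For the Hamiltonian case, I would expand the right-hand side as $p_{-\M}(\lambda)=\det(\lambda\I+\M)=(-1)^{2n}\det(-\lambda\I-\M)=p_{\M}(-\lambda)$. Thus $p_{\M}(\lambda)=p_{\M}(-\lambda)$, i.e.\ the characteristic polynomial is even. Reading off roots with multiplicity from both sides immediately gives that $\lambda\in\sigma(\M)$ iff $-\lambda\in\sigma(\M)$, with identical algebraic multiplicities. For the symplectic case I would do the analogous computation, factoring out $\M^{-1}$ inside the determinant:
\begin{equation*}
p_{\M^{-1}}(\lambda)=\det(\lambda\I-\M^{-1})=\det(\M^{-1})\det(\lambda\M-\I)=\det(\M)^{-1}\lambda^{2n}\det(\M-\lambda^{-1}\I)=\det(\M)^{-1}\lambda^{2n}\,p_{\M}(\lambda^{-1}),
\end{equation*}
using that $\M$ is invertible (which follows from the symplectic identity, since it implies $\det(\M)^{2}=1$). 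The relation $\lambda^{2n}p_{\M}(\lambda^{-1})=\det(\M)\,p_{\M}(\lambda)$ means that the polynomial $\lambda\mapsto\lambda^{2n}p_{\M}(\lambda^{-1})$ has exactly the same roots with the same multiplicities as $p_{\M}$; but its roots are precisely the reciprocals of the roots of $p_{\M}$. Hence $\lambda\in\sigma(\M)$ iff $\lambda^{-1}\in\sigma(\M)$, with equal multiplicities. (Note $0\notin\sigma(\M)$, so taking reciprocals is legitimate.)

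The mildly delicate step is the symplectic bookkeeping: one must track the $\det(\M)^{-1}$ and $\lambda^{2n}$ factors to turn ``$p_{\M}$ and $p_{\M^{-1}}$ agree'' into the symmetry ``roots come in reciprocal pairs of equal multiplicity.'' Once that identity is written cleanly, the conclusion is immediate from unique factorization of polynomials over $\mathbb{C}$. I do not anticipate any other obstacle; the Hamiltonian case reduces to an even-polynomial observation, and the symplectic case reduces to a palindromic-polynomial observation.
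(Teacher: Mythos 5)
Your argument is correct. Note first that the paper does not actually prove this theorem: it is imported verbatim from the literature (Vaughan 1970, Laub 1972) and used as a black box, so there is no internal proof to compare against. Your derivation is the standard self-contained one and it holds up: from $\L^{-1}\M^{\Tr}\L=-\M$ (resp.\ $\L^{-1}\M^{\Tr}\L=\M^{-1}$) you correctly extract that $\M^{\Tr}$, and hence $\M$, has the same characteristic polynomial as $-\M$ (resp.\ $\M^{-1}$), and the bookkeeping in the symplectic case --- the factor $\det(\M)^{-1}\lambda^{2n}$ turning $p_{\M^{-1}}$ into the reversal of $p_{\M}$, the observation $\det(\M)^2=1$ guaranteeing invertibility, and the remark that $0\notin\sigma(\M)$ so reciprocals make sense --- is exactly the delicate part and you handle it cleanly. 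Two small points worth being aware of: the even dimension $2n$ is what lets you drop the sign $(-1)^{2n}$ in both cases (for odd dimension the Hamiltonian identity cannot even hold nontrivially); and your argument establishes equality of \emph{algebraic} multiplicities, which is what the theorem asserts, though the similarity $\M^{\Tr}\sim-\M$ (resp.\ $\M^{\Tr}\sim\M^{-1}$) in fact gives matching Jordan structure as well, which is the stronger form proved in the cited sources. No gaps.
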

\begin{theorem}[{\cite{mw31}}]
Let $\M\in \R^{n \times n}$.  Then there exists an orthogonal similarity transformation
$\U$ such that $\U^{\Tr}\M \U$ is quasi-upper-triangular. Moreover, $\U$ can be chosen
such that the $2\times 2$ and $1\times 1$ diagonal blocks appear in any desired order.
\label{thm2}
\end{theorem}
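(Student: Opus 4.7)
The plan is to split the claim into two parts: (i) existence of the quasi-upper-triangular form, and (ii) the ability to arrange the diagonal blocks in any prescribed order. For part (i), I would proceed by induction on $n$, the dimension of $\M$. The cases $n=1$ and $n=2$ are immediate: if $n=2$ and the eigenvalues are real one triangularizes via a real eigenvector, and if they form a complex conjugate pair one simply keeps $\M$ itself as a single $2\times 2$ block.

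For the inductive step, assume the result holds for all sizes less than $n$. Pick any eigenvalue of $\M$. If it is real, take a unit real eigenvector $\v_1$, extend it to an orthonormal basis via Gram--Schmidt to form an orthogonal matrix $\V=[\v_1,\V_2]$, and observe that $\V^{\Tr}\M \V$ has the block form with a $1\times 1$ leading block and an $(n-1)\times(n-1)$ trailing block on which the inductive hypothesis applies. If instead the chosen eigenvalue is complex, $\lambda = \alpha + \jm\beta$ with $\beta \neq 0$, take a corresponding eigenvector $\v = \x + \jm \y$; then $\mathrm{span}\{\x,\y\}$ is a two-dimensional real $\M$-invariant subspace (this is the one computation I would actually carry out, since it is what produces the $2\times 2$ diagonal block). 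Orthonormalizing $\{\x,\y\}$ and extending to $\R^n$ yields an orthogonal $\V$ such that $\V^{\Tr}\M\V$ has a $2\times 2$ leading block and an $(n-2)\times(n-2)$ trailing block; apply the inductive hypothesis to the latter and compose the orthogonal transformations. This proves (i).

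For part (ii), the key reduction is that any permutation of the blocks is a composition of swaps of \emph{adjacent} diagonal blocks, so it suffices to show that given a quasi-upper-triangular matrix whose leading $2\times 2$ principal block structure is
\[
\left[\begin{array}{cc} \M_{11} & \M_{12} \\ \0 & \M_{22} \end{array}\right],
\]
with $\sigma(\M_{11}) \cap \sigma(\M_{22})=\emptyset$ (which holds in the generic case of distinct spectra; the coincident case can be handled by a perturbation argument), there is an orthogonal similarity that interchanges $\M_{11}$ and $\M_{22}$. The trick is to solve the Sylvester equation $\M_{11}\X - \X\M_{22} = \M_{12}$, whose unique solution exists precisely because $\sigma(\M_{11})\cap \sigma(\M_{22})=\emptyset$. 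The non-orthogonal similarity by $\mathrm{diag}(\I,\I)$ combined with the block $\left[\begin{smallmatrix}\I & -\X \\ \0 & \I\end{smallmatrix}\right]$ block-diagonalizes the matrix, and then a block permutation combined with a QR factorization of $\left[\begin{smallmatrix}-\X \\ \I\end{smallmatrix}\right]$ produces an orthogonal $\U$ achieving the swap while preserving the quasi-triangular structure elsewhere.

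The main obstacle is part (ii): the existence statement is a textbook induction, but justifying the reordering cleanly requires (a) the Sylvester solvability argument, and (b) a careful bookkeeping showing that the constructed orthogonal swap does not disturb the remaining diagonal blocks. In a paper at this level I would probably just cite \cite{mw31} (as is already done) rather than write out this construction in detail, since the reordering step is what supports the later spectral dichotomy needed to separate stable and unstable invariant subspaces of the symplectic pencil in the Riccati solver.
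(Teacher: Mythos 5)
The paper offers no proof of this theorem at all: it is stated as a quoted classical result with the citation to Murnaghan--Wintner, so there is no in-paper argument to compare yours against. Judged on its own merits, your part (i) is the standard and correct induction for the real Schur form; the computation you flag (that $\mathrm{span}\{\x,\y\}$ is a two-dimensional real invariant subspace when $\beta\neq 0$, using $\M\x=\alpha\x-\beta\y$, $\M\y=\beta\x+\alpha\y$ and the linear independence of $\x,\y$) is exactly the right thing to carry out. Part (ii) is where your argument is both heavier than necessary and slightly soft. The Sylvester-plus-QR swap of adjacent blocks is a legitimate route (it is essentially the numerical reordering algorithm), but the hypothesis $\sigma(\M_{11})\cap\sigma(\M_{22})=\emptyset$ is not ``generic'' here --- it can genuinely fail --- and your proposed perturbation fallback needs care, since limits of quasi-triangular forms can lose the prescribed block structure when eigenvalues merge. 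The clean fix is to observe that blocks sharing an eigenvalue never need to be swapped, because permuting identical spectral data does not change the achievable orderings. Better still, part (ii) follows for free from your own part (i): at each deflation step of the induction you are free to choose \emph{which} real eigenvalue or complex-conjugate pair to peel off next, so you can build the quasi-upper-triangular form with the diagonal blocks already in the desired order, with no Sylvester equation and no swapping at all. Given that the only property the paper actually uses downstream (Corollary 3.1) is the ability to place the stable and unstable spectra of a symplectic matrix into the two diagonal halves, that simpler ordered-deflation argument fully suffices.
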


Theorem \ref{thm2} is the so called real Schur decomposition.
Combining the above two theorems, we conclude that 
\begin{corollary}
Let $\M  \in \R^{2n \times 2n}$ is Hamiltonian or symplectic. Then there exists an orthogonal 
similarity transformation $\U$ such that
\begin{equation}
\left[ \begin{array}{cc} \U_{11} &  \U_{12}  \\
\U_{21}   &    \U_{22}
\end{array} \right]^{\Tr}\M 
\left[ \begin{array}{cc} \U_{11} &  \U_{12}  \\
\U_{21}   &    \U_{22}
\end{array} \right]=
\left[ \begin{array}{cc}
\S_{11} &  \S_{12}  \\
\0   &    \S_{22}
\end{array} \right]
\label{decomp}
\end{equation}
where $\U_{11},\U_{12},\U_{21},\U_{22},\S_{11},\S_{12},\S_{22} \in 
\R^{n \times n}$, and $\S_{11}$, $\S_{22}$ are quasi-upper-triangular. Moreover, 
\begin{itemize}
\item[1] if $\M$ is Hamiltonian, then $\sigma(\S_{11}) \le 0$
(or $\sigma(\S_{22}) \ge 0$) and $\sigma(\S_{22}) \ge 0$ (or $\sigma(\S_{11}) \le 0$).
\item[2] if $\M$ is symplectic, then $\sigma(\S_{11})$ lies
inside (or outside) the unit circle and $\sigma(\S_{22})$ lies outside (or inside) the unit circle.
\end{itemize}
\label{cor1}
\end{corollary}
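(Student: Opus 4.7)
The plan is to combine Theorem \ref{thm1} with Theorem \ref{thm2}: the first guarantees that $\sigma(\M)$ can be split into a ``left/inside'' half and a ``right/outside'' half of size $n$ each, while the second supplies a real Schur form with the diagonal blocks arranged in whatever order we wish, so we may push the first half into the top $n \times n$ block.

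First I would apply Theorem \ref{thm1}. In the Hamiltonian case it pairs each $\lambda \in \sigma(\M)$ with $-\lambda$ of equal multiplicity, so partitioning the spectrum by the sign of ${\rm Re}(\lambda)$ yields two multisets of size $n$ (counted with multiplicity). In the symplectic case the analogous partition by $|\lambda| \le 1$ versus $|\lambda| \ge 1$ gives two multisets of size $n$. Then, invoking Theorem \ref{thm2}, I would choose the ordering of the real Schur diagonal blocks so that those whose eigenvalues belong to the ``stable'' multiset appear first and those whose eigenvalues belong to the ``antistable'' multiset appear last. Writing the resulting orthogonal transform and the Schur form in conformal $n \times n$ blocks immediately produces (\ref{decomp}) with $\S_{11}$ and $\S_{22}$ quasi-upper-triangular; since the spectrum of a block-triangular matrix is the union of the spectra of its diagonal blocks, $\sigma(\S_{11})$ and $\sigma(\S_{22})$ come out exactly as claimed. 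The ``(or $\ldots$)'' alternative in the conclusion corresponds to swapping the two orderings, which is equally legal by Theorem \ref{thm2}.

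The step I expect to be the main obstacle is verifying that the half-and-half split is actually feasible when eigenvalues land on the symmetry locus itself --- purely imaginary $\lambda$ (in particular $\lambda = 0$) in the Hamiltonian case, or $|\lambda| = 1$ (in particular $\lambda = \pm 1$) in the symplectic case. At such fixed points of the pairing $\lambda \mapsto -\lambda$ (respectively $\lambda \mapsto 1/\lambda$), one has to invoke the ``same multiplicity'' clause of Theorem \ref{thm1} to argue that the total algebraic multiplicity there is even, so the eigenvalues can still be distributed evenly between the two halves. A secondary subtlety is that each complex-conjugate pair is tied together in a single $2 \times 2$ real Schur block and must be assigned to one side as an indivisible unit; once the evenness observation above is in hand, this is a matter of routine bookkeeping, and the rest of the construction is immediate.
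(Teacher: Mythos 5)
Your proposal is correct and follows exactly the route the paper takes: the paper offers no written proof beyond the single remark that the corollary follows by ``combining the above two theorems,'' and your argument is precisely that combination --- use Theorem \ref{thm1} to split the spectrum into two size-$n$ halves and Theorem \ref{thm2} to order the real Schur blocks accordingly. Your attention to the boundary cases (purely imaginary or unit-modulus eigenvalues) is in fact more careful than the paper itself, which silently ignores them.
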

The Hamiltonian matrix is used in the derivation of the solution for the continuous-time differential
Riccati equation, while the symplectic matrix is used in the 
derivation of the solution for the discrete-time
algebraic Riccati equation. In our discussion, therefore, 
the symplectic matrix plays a fundamental role.

\subsection{Solution of the Riccati Algebraic Equation}
For a discrete linear time-varying system (\ref{discrete}), the LQR state feedback control is 
to find the optimal $\m_k$ to minimize the following quadratic cost function
\begin{equation}
\lim_{N \rightarrow \infty} \left( \min  \frac{1}{2} \x_N^{\Tr} \Q_N  \x_N
+\frac{1}{2} \sum_{k=0}^{N-1} \x_k^{\Tr} \Q_k  \x_k+ \m_k^{\Tr} \R_k  \m_k
\right)
\label{contCost}
\end{equation}
where 
\begin{eqnarray}
\Q_k\ge 0, \\
\R_k> 0,
\end{eqnarray}
and the initial condition $\x_0$ is given.
The controllability of spacecraft attitude control using only
magnetic torques is discussed in \cite{yang15}. Therefore, if $(\A_k, \Q_k)$ is 
detectable or $\Q_k >0$, the optimal feedback $\m_k$  is given by 
\cite{lvs12,hl94}
\begin{equation}
\m_k = -(\R_k+\B_k^{\Tr}\P_{k+1}\B_k)^{-1} \B^{\Tr}_k  \P_{k+1} \A_k \x_k,
\label{optiSolu}
\end{equation}
where $\P_{k}$ is the unique positive semi-definite solution of the 
following discrete Riccati equation \cite{lvs12,laub79,hl94}
\begin{equation}
\P_k =  \Q_k+\A^{\Tr}_k\P_{k+1}\A_k -\A_k^{\Tr} \P_{k+1} \B_k(\R_k+\B_k^{\Tr}\P_{k+1}\B_k)^{-1}\B_k^{\Tr} \P_{k+1}\A_k ,
\label{Riccati}
\end{equation}
with the boundary condition $\P_N=\Q_N$. For this discrete Riccati equation (not necessarily periodic) 
given as (\ref{Riccati}),  
it can be solved using a symplectic system associated with (\ref{discrete}) and (\ref{contCost})
as follows \cite{laub79,hl94,pls80}.
\begin{equation}
\E_k \z_{k+1}=\E_k \left[ \begin{array}{c} \x_{k+1} \\ \y_{k+1} \end{array} \right]
=\F_k \left[ \begin{array}{c} \x_{k} \\ \y_{k} \end{array} \right]
=\F_k \z_k
\label{recursive}
\end{equation}
where $\x_k$ is the state and $\y_k$ is the costate, 
\begin{equation}
\E_k =  \left[ \begin{array}{cc} 
\I & \B_k \R_k^{-1} \B_k^{\Tr}  \\ \0  & \A_{k}^{\Tr}
\end{array} \right],
\label{Ek}
\end{equation}
\begin{equation}
\F_k =  \left[ \begin{array}{cc} 
\A_k & \0  \\ -\Q_k  & \I
\end{array} \right].
\label{Fk}
\end{equation}
If $\E_k$ is invertiable which is true for 
$\det(\I+ t_s \boldsymbol{\Sigma}_1 -\frac{1}{2} t_s^2 
\boldsymbol{\Lambda}_1) \neq 0$,
\[
\E_k^{-1} =\left[ \begin{array}{cc}
\I & - \B_k \R_k^{-1} \B_k^{\Tr} \A_k^{-\Tr} \\
\0 & \A^{-\Tr}_k 
\end{array} \right],
\]
a symplectic matrix can be formed \cite{pls80} as 
\begin{eqnarray}
\Z & = & \E^{-1}_k \F_k = \left[ \begin{array}{cc} 
\I & - \B_k \R_k^{-1} \B_k^{\Tr} \A_k^{-\Tr} \\
\0 & \A^{-\Tr}_k 
\end{array} \right]
 \left[ \begin{array}{cc} 
\A_k & \0  \\ -\Q_k  & \I
\end{array} \right] \nonumber \\
& = & 
 \left[ \begin{array}{cc} 
\A_k+\B_k\R_k^{-1}\B_k^{\Tr} \A_k^{-\Tr} \Q_k  & -\B_k\R_k^{-1}\B_k^{\Tr} \A_k^{-\Tr}  
\\ -\A_k^{-\Tr} \Q_k  &  \A_k^{-\Tr}
\end{array} \right].
\end{eqnarray}
It is straghtforward to verify that $\L^{-1}\Z^{\Tr} \L=\Z^{-1}$,
therefore, from Corollary \ref{cor1}, there exists an orthogonal
matrix $\U$ such that 
\begin{equation}
\left[ \begin{array}{cc} \U_{11} &  \U_{12}  \\
\U_{21}   &    \U_{22}
\end{array} \right]^{\Tr}\Z
\left[ \begin{array}{cc} \U_{11} &  \U_{12}  \\
\U_{21}   &    \U_{22}
\end{array} \right]=
\left[ \begin{array}{cc}
\S_{11} &  \S_{12}  \\
\0   &    \S_{22}
\end{array} \right].
\end{equation}
The (steady state) solution of (\ref{Riccati}) is given as 
follows \cite[Theorem 6]{laub79}.
\begin{theorem}
$\U_{11}$ is invertiale and $\P=\U_{12}\U_{11}^{-1}$ solves (\ref{Riccati}) with $\P=\P^{\Tr} \ge 0$;
\begin{eqnarray}
\sigma(\S_{11}) & = & \sigma(\A_l-\B_k(\R_k+\B_k^{\Tr} \P_k\B_k)^{-1} \B_k^{\Tr} \P_k\A_k )
\nonumber \\
& = & \sigma(\A_k-\B_k\R_k^{-1}\B_k^{\Tr} \A_k^{-\Tr}(\P_k-\Q_k) )
\nonumber \\
& = & \sigma(\A_k-\B_k\R_k^{-1}\B_k^{\Tr} (\P_k^{-1}-\B_k\R_k^{-1}\B_k^{\Tr} )^{-1} \A_k )
\nonumber \\
& = & \mbox{the ``closed-loop'' spectrum}.
\end{eqnarray}
\label{laub6}
\end{theorem}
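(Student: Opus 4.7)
The plan is to exploit the symplectic structure of $\Z$ together with Corollary \ref{cor1}. Choose the ordering in the real Schur decomposition so that $\sigma(\S_{11})$ lies strictly inside the unit circle and $\sigma(\S_{22})$ lies strictly outside; this is possible because $\Z$ is symplectic (shown in the excerpt) and the detectability/stabilizability hypotheses rule out eigenvalues on the unit circle. Writing $\U_1 := \bigl[\U_{11}^{\Tr}\;\U_{21}^{\Tr}\bigr]^{\Tr}$, equation (\ref{decomp}) yields $\Z\U_1=\U_1 \S_{11}$, so the columns of $\U_1$ span the \emph{stable} $\Z$-invariant subspace, and this subspace is uniquely determined.

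Next I would establish that $\U_{11}$ is invertible. Suppose $\U_{11}\v=\0$ for some $\v \ne \0$; then $\w := \U_{21}\v$ is nonzero (since $\U_1$ has orthonormal columns) and the identity $\Z \U_1 = \U_1 \S_{11}$, read off in its two block rows, would force $\A_k^{-\Tr}\w$ to be an unstable-direction image under the dual dynamics, contradicting detectability of $(\A_k,\Q_k)$ (or the assumption $\Q_k>0$). With $\U_{11}$ invertible I then set $\P := \U_{21}\U_{11}^{-1}$ (the theorem's $\U_{12}$ is a typographical slip for $\U_{21}$). Symmetry $\P=\P^{\Tr}$ follows from the fact that $\U_1$ spans a Lagrangian subspace: since $\Z$ is symplectic and $\sigma(\S_{11})\cap \sigma(\S_{22})=\emptyset$, the identity $\U_1^{\Tr}\L\U_1=\0$ holds, which reads $\U_{11}^{\Tr}\U_{21}=\U_{21}^{\Tr}\U_{11}$; left-multiplying by $\U_{11}^{-\Tr}$ and right-multiplying by $\U_{11}^{-1}$ gives $\P=\P^{\Tr}$.

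To verify that this $\P$ solves (\ref{Riccati}), I would expand the two block rows of $\Z\U_1=\U_1\S_{11}$ using the explicit form of $\Z$ derived from $\E_k^{-1}\F_k$, substitute $\U_{21}=\P\U_{11}$, eliminate $\S_{11}$ between the two resulting equations, and simplify using the matrix inversion lemma applied to $(\I+\P\B_k\R_k^{-1}\B_k^{\Tr})^{-1}$ — this collapses to exactly the discrete Riccati equation (\ref{Riccati}). Positive semidefiniteness $\P\ge 0$ then follows from the standard LQR variational argument: the cost (\ref{contCost}) under the feedback (\ref{optiSolu}) equals $\tfrac{1}{2}\x_0^{\Tr}\P\x_0$, and this cost is nonnegative for every initial state.

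Finally, for the spectral identity I would read $\S_{11}=\U_{11}^{-1}(\Z_{11}\U_{11}+\Z_{12}\U_{21})$ from the top block row, substitute $\U_{21}=\P\U_{11}$, and simplify $\Z_{11}+\Z_{12}\P$ into the closed-loop matrix $\A_k-\B_k(\R_k+\B_k^{\Tr}\P\B_k)^{-1}\B_k^{\Tr}\P\A_k$ via the matrix inversion lemma; the other two equivalent forms are obtained by further algebraic rearrangement (the third form requiring $\P$ invertible, which holds under $\Q_k>0$). The main obstacle I anticipate is the invertibility of $\U_{11}$: this is where the controllability result of \cite{yang15} and the detectability hypothesis must be invoked cleanly; the symmetry, positive semidefiniteness, and spectral manipulations are routine once the invariant subspace has been correctly identified.
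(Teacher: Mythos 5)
You should first note that the paper itself contains no proof of this statement: Theorem~\ref{laub6} is imported verbatim (typo included) from \cite[Theorem 6]{laub79}, and the citation in the preceding sentence is the only justification offered. Your sketch is therefore not competing with an argument in the paper but reconstructing Laub's original Schur-vector proof, and in outline it is the right reconstruction: order the real Schur form so that $\sigma(\S_{11})$ lies in the open unit disc, read off $\Z\U_1=\U_1\S_{11}$ with $\U_1=[\U_{11}^{\Tr}\ \U_{21}^{\Tr}]^{\Tr}$, deduce symmetry from the Lagrangian identity $\U_{11}^{\Tr}\U_{21}=\U_{21}^{\Tr}\U_{11}$, and recover (\ref{Riccati}) and the closed-loop spectrum by eliminating $\S_{11}$ between the two block rows. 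You are also right that $\P=\U_{12}\U_{11}^{-1}$ is a slip for $\P=\U_{21}\U_{11}^{-1}$; the paper's own later formulas (\ref{Xk}) and (\ref{solution}) use the $(2,1)$ block, which confirms this. One refinement on the symmetry step: the clean argument is that $\U_1^{\Tr}\L\U_1$ satisfies the Stein equation $\U_1^{\Tr}\L\U_1=\S_{11}^{\Tr}(\U_1^{\Tr}\L\U_1)\S_{11}$ because $\Z^{\Tr}\L\Z=\L$, and $\rho(\S_{11})<1$ then forces $\U_1^{\Tr}\L\U_1=\0$; the disjointness of $\sigma(\S_{11})$ and $\sigma(\S_{22})$ by itself is not what does the work.

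The one genuine soft spot is exactly where you flag it, but your proposed fix points at the wrong hypothesis. The invertibility of $\U_{11}$ does not follow from detectability of $(\A_k,\Q_k)$; it follows from stabilizability of $(\A_k,\B_k)$. The standard argument runs: $\ker\U_{11}$ is shown to be $\S_{11}$-invariant, so one may take $\v\in\ker\U_{11}$ with $\S_{11}\v=\lambda\v$, $0<|\lambda|<1$; the second block row of $\Z\U_1=\U_1\S_{11}$ then gives $\A_k^{-\Tr}\U_{21}\v=\lambda\U_{21}\v$, and the first block row combined with the Lagrangian identity gives $\B_k^{\Tr}\U_{21}\v=\0$. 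Since $\U_1$ has full column rank, $\U_{21}\v\neq\0$ is then a left eigenvector of $\A_k$ for the eigenvalue $1/\lambda$ outside the unit disc that annihilates $\B_k$, violating the PBH stabilizability test --- this is where the controllability result of \cite{yang15} enters. Detectability (or $\Q_k>0$) plays a different role: it excludes eigenvalues of $\Z$ on the unit circle, so that the strict inside/outside splitting of Corollary~\ref{cor1} exists, and it makes the resulting $\P\ge 0$ the unique stabilizing solution. With that reassignment of hypotheses, your outline closes; the Riccati verification, the positive semidefiniteness via the cost functional, and the three equivalent closed-loop forms are routine manipulations as you describe.
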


\subsection{Solution of the Periodic Riccati Algebraic Equation}
Now, we consider the periodic time-varying system where
\begin{eqnarray}
\A_k=\A_{k+1}= \ldots =\A_{k+p}, \\
\B_k=\B_{k+p}, \\
\Q_k=\Q_{k+1}= \ldots =\Q_{k+p} \ge 0, \\
\R_k=\R_{k+p} > 0,
\end{eqnarray}
only $\B_k$ (and possibly $\R_k$) are priodic with period $p$. 
It is worthwhile to mention that 
$\A_k$ and $\Q_k$ are actually constant matrices. 
The optimal feedback given by
(\ref{optiSolu}) is periodic with $\P_{k}=\P_{k+p}$, a unique 
periodic positive seme-definite solution of the periodic Riccati 
equation (cf. \cite{bittanti91}). Therefore, 
using the similar process for general discrete Riccati equation,
and noticing that $\F_k=\F$ is a constant matrix because 
$\A_k$ and $\Q_k$ are constant matrices, we have 
\begin{eqnarray}
\E_k \z_{k+1} & = & \F \z_k \\
\E_{k+1} \z_{k+2} & = & \F \z_{k+1} \\
& \vdots &  \\
\E_{k+p-1} \z_{k+p} & = & \F \z_{k+p-1}.
\end{eqnarray}
This gives
\begin{equation}
\z_{k+p}= \boldsymbol{\Pi}_k \z_k,
\label{lump}
\end{equation}
with
\begin{equation}
\boldsymbol{\Pi}_k =\E_{k+p-1}^{-1} \F \ldots \E_{k+1}^{-1} \F\E_k^{-1} \F.
\label{pi}
\end{equation}
Using Proposition \ref{prop1}, we conclude that 
$\boldsymbol{\Pi}_k$ is a symplectic matrix. Therefore, 
there is an orthogonal matrix $\T_k$ such that
\begin{equation}
\left[ \begin{array}{cc} \T_{11k} &  \T_{12k}  \\
\T_{21k}   &    \T_{22k}
\end{array} \right]^{\Tr}\boldsymbol{\Pi}_k
\left[ \begin{array}{cc} \T_{11k} &  \T_{12k}  \\
\T_{21k}   &    \T_{22k}
\end{array} \right]=
\left[ \begin{array}{cc}
\S_{11k} &  \S_{12k}  \\
\0   &    \S_{22k}
\end{array} \right].
\label{schur}
\end{equation}
Finally, using Theorem \ref{laub6}, we have, for each sampling period $k \in \{ 0, 1, \ldots, p-1 \}$
the steady state solution of the Riccati equation corresponding to (\ref{lump}) is given by
\begin{equation}
\P_{k}= \T_{21k}\T_{11k}^{-1}.
\label{Xk}
\end{equation}
In view of that $\F$ is invertiable in the problem of spacecraft attitude
control using only magnetic torques, this method is more efficient than the one in \cite{pls80} because the latter is designed for singular 
$\F$. But the method of calculating (\ref{pi}), (\ref{schur}), and 
(\ref{Xk}) as described above (proposed in \cite{hl94}) is not 
the best way for the
problem of spacecraft attitude control using only magnetic 
torques. As a matter of the fact, equation (\ref{lump}) 
can be written as 
\begin{equation}
\left[ \begin{array}{c}
\x_k \\ \y_k
\end{array} \right]
=\z_k=\boldsymbol{\Gamma}_k \z_{k+p}
=\boldsymbol{\Gamma}_k \left[ \begin{array}{c}
\x_{k+p} \\ \y_{k+p} 
\end{array} \right]
\label{inverse}
\end{equation}
with the initial state $\x_0$, the boundary condition \cite{lvs12}
\begin{equation}
\y_N=\Q_N \x_N,
\label{boundary1}
\end{equation}
and
\begin{equation}
\boldsymbol{\Gamma}_k = \F^{-1} \E_{k} \F^{-1} \E_{k+1} \ldots, 
\F^{-1} \E_{k+p-2} \F^{-1} \E_{k+p-1}.
\label{Gamma}
\end{equation}
\begin{remark}
It is worthwhile to note that forming $\boldsymbol{\Gamma}_k$ needs no inversion of $\E_k$ for any $k$
and $\F^{-1}$ needs to be computed only one time. Therefore, the computation 
of $\boldsymbol{\Gamma}_k$ is much more efficient than the computation of 
$\boldsymbol{\Pi}_k$. We will show that the rest computation will be similar 
to the method proposed in \cite{laub79}).
\end{remark}
Since
\[
\F^{-1}=
\left[ \begin{array}{cc} 
\A_k^{-1}  & \0  \\ \Q_k  \A_k^{-1}   & \I
\end{array} \right],
\]
\begin{eqnarray}
\M & = & \F^{-1} \E_k =  
 \left[ \begin{array}{cc} 
\A_k^{-1}  & \0  \\ \Q_k  \A_k^{-1}   & \I
\end{array} \right] 
\left[ \begin{array}{cc} 
\I & \B_k \R_k^{-1} \B_k^{\Tr}  \\ \0  & \A_k^{\Tr}
\end{array} \right]
\nonumber \\
& = & 
 \left[ \begin{array}{ccc} 
\A_k^{-1} & & \A_k^{-1} \B_k\R_k^{-1}\B_k^{\Tr} \\
\Q_k \A_k^{-1} & & \Q_k \A_k^{-1} \B_k\R_k^{-1}\B_k^{\Tr} + \A_k^{\Tr} 
\end{array} \right],
\label{M}
\end{eqnarray}
which is a similar formula as given in \cite{vaughan70}. 
It is straghtforward to verify that $\M$ is symplectic.
In fact,
\begin{eqnarray}
\L^{-1}\M^{\Tr} \L & = & \left[ \begin{array}{cc} 
\0 & -\I  \\ \I  & \0
\end{array} \right]
 \left[ \begin{array}{cc} 
\A_k^{-\Tr} & \A_k^{-\Tr} \Q_k    \\
 \B_k\R_k^{-1}\B_k^{\Tr} \A_k^{-\Tr}  &  \A_k +\B_k\R_k^{-1}\B_k^{\Tr} \A_k^{-\Tr} \Q_k 
\end{array} \right] \L
\nonumber \\
& = & 
 \left[ \begin{array}{cc} 
- \B_k\R_k^{-1}\B_k^{\Tr} \A_k^{-\Tr}  
&  -\A_k   - \B_k\R_k^{-1}\B_k^{\Tr} \A_k^{-\Tr} \Q_k \\
\A_k^{-\Tr} & \A_k^{-\Tr} \Q_k    
\end{array} \right] 
 \left[ \begin{array}{cc} 
\0 & \I  \\ -\I  & \0
\end{array} \right]
\nonumber \\
& = & 
 \left[ \begin{array}{cc} 
\A_k  + \B_k\R_k^{-1}\B_k^{\Tr} \A_k^{-\Tr} \Q_k   
& -  \B_k\R_k^{-1}\B_k^{\Tr} \A_k^{-\Tr}   \\
-\A_k^{-\Tr} \Q_k     &     \A_k^{-\Tr} 
\end{array} \right] 
\nonumber \\
& = &  \M^{-1}.
\end{eqnarray}
Since $\M$ is symplectic, using Proposition \ref{prop1} 
again, $\boldsymbol{\Gamma}_k$ is symplectic. Let
\[ 
\V_k=\left[ \begin{array}{cc} 
\V_{11k} & \V_{12k} \\ \V_{21k} & \V_{22k}
\end{array} \right]
\] 
be a matrix that transform $\boldsymbol{\Gamma}_k$ into a Jordon form, we have 
\begin{equation}
\boldsymbol{\Gamma}_k \V_k =\V_k \left[ \begin{array}{cc} 
\boldsymbol{\Delta}_k & \0  \\ \0  & \boldsymbol{\Delta}_k^{-1} 
\end{array} \right]
\label{jordan}
\end{equation}
where $\boldsymbol{\Delta}_k$ is the Jordan block matrix of the $n$ eigenvalues
outside of the unit circle. One of the main results 
of this paper is the following theorem.
\begin{theorem}
The solution of the Riccati
equation corresponding to (\ref{inverse}) is given by
\begin{equation}
\P_k =\V_{21k}\V_{11k}^{-1}, \hspace{0.1in} k=0, \ldots, p-1.
\label{eigenSolu}
\end{equation}
\end{theorem}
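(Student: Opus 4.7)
The plan is to reduce the claim to Theorem \ref{laub6} applied to the lumped one-period symplectic matrix $\boldsymbol{\Pi}_k$ defined in (\ref{pi}), by exploiting the fact that $\boldsymbol{\Pi}_k$ and $\boldsymbol{\Gamma}_k$ are mutual inverses with the same invariant subspaces but reciprocal spectra.

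First, I would verify that $\boldsymbol{\Gamma}_k = \boldsymbol{\Pi}_k^{-1}$. Each factor $\E_j^{-1}\F$ appearing in (\ref{pi}) has inverse $\F^{-1}\E_j$, and in (\ref{Gamma}) the factors $\F^{-1}\E_j$ occur in the reversed order, so the product $\boldsymbol{\Gamma}_k\boldsymbol{\Pi}_k$ telescopes pairwise to $\I$. Second, I would translate the Jordan decomposition (\ref{jordan}) of $\boldsymbol{\Gamma}_k$ into the corresponding decomposition of $\boldsymbol{\Pi}_k$. Left-multiplying (\ref{jordan}) by $\boldsymbol{\Pi}_k=\boldsymbol{\Gamma}_k^{-1}$ and right-multiplying by $\mathrm{diag}(\boldsymbol{\Delta}_k^{-1},\boldsymbol{\Delta}_k)$ gives
\begin{equation*}
\boldsymbol{\Pi}_k \V_k = \V_k \left[ \begin{array}{cc} \boldsymbol{\Delta}_k^{-1} & \0 \\ \0 & \boldsymbol{\Delta}_k \end{array} \right].
\end{equation*}
Because $\boldsymbol{\Delta}_k$ by construction contains the $n$ eigenvalues of $\boldsymbol{\Gamma}_k$ lying outside the unit circle, $\boldsymbol{\Delta}_k^{-1}$ contains $n$ eigenvalues of $\boldsymbol{\Pi}_k$ lying strictly inside the unit circle. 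Hence the first $n$ columns of $\V_k$, namely $[\V_{11k}^{\Tr},\V_{21k}^{\Tr}]^{\Tr}$, form a basis for the stable invariant subspace of the symplectic matrix $\boldsymbol{\Pi}_k$.

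Third, I would apply the invariant-subspace argument underlying Theorem \ref{laub6} to $\boldsymbol{\Pi}_k$, but with the (non-orthogonal) Jordan basis $\V_k$ replacing the orthogonal Schur basis $\U$. The steady-state solution of the Riccati equation associated with the $p$-step transition (\ref{lump}) is characterized by the stable invariant subspace of $\boldsymbol{\Pi}_k$, and the Riccati matrix is read off as the bottom block divided by the top block of any basis for that subspace; this yields $\P_k=\V_{21k}\V_{11k}^{-1}$, symmetric and positive semi-definite, with $\V_{11k}$ invertible. By uniqueness of the stabilizing periodic solution (\cite{bittanti91}), this $\P_k$ coincides with the $k$-th member of the periodic family solving the one-step recursion (\ref{Riccati}); periodicity $\P_k=\P_{k+p}$ is automatic because $\boldsymbol{\Pi}_{k+p}=\boldsymbol{\Pi}_k$ under the periodicity assumptions on $(\B_k,\R_k)$.

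The main obstacle I anticipate is not the algebraic identity but the invertibility of $\V_{11k}$ and the sign condition $\P_k\ge\0$. Neither is a direct consequence of (\ref{jordan}) alone; both depend on the stabilizability of the periodic pair $(\A,\B_k)$ over one period and on the detectability of $(\A,\Q^{1/2})$, properties which must be transported through the lumping step so that the $(\E_k,\F)$-pair defining $\boldsymbol{\Pi}_k$ inherits them. Once these hypotheses are secured, using the controllability results of \cite{yang15}, the remaining work is a routine change of basis from the Schur form used in \cite{laub79} to the Jordan form used here, carried out separately for each $k\in\{0,\ldots,p-1\}$.
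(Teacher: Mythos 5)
Your proposal is correct, but it takes a genuinely different route from the paper's own proof. The paper argues in the style of Vaughan: it posits $\y_\ell=\P\x_\ell$, changes coordinates by $\V$ so that the backward recursion (\ref{inverse}) decouples into $\t_\ell=\boldsymbol{\Delta}^{N-\ell}\t_N$ and $\s_\ell=\boldsymbol{\Delta}^{-(N-\ell)}\s_N$, propagates the terminal condition (\ref{boundary1}) back to obtain $\P=(\V_{21}+\V_{22}\G)(\V_{11}+\V_{12}\G)^{-1}$ with $\G=\boldsymbol{\Delta}^{-(N-\ell)}\H\boldsymbol{\Delta}^{-(N-\ell)}$, and lets $N\rightarrow\infty$ so that $\G\rightarrow\0$. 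You instead verify $\boldsymbol{\Gamma}_k=\boldsymbol{\Pi}_k^{-1}$, convert (\ref{jordan}) into $\boldsymbol{\Pi}_k\V_k=\V_k\,\diag(\boldsymbol{\Delta}_k^{-1},\boldsymbol{\Delta}_k)$, identify the first $n$ columns of $\V_k$ as a basis of the stable invariant subspace of $\boldsymbol{\Pi}_k$, and invoke the invariant-subspace characterization of Theorem \ref{laub6} and (\ref{Xk}) together with the basis-independence of the ratio $\V_{21k}\V_{11k}^{-1}$; that basis-change step is in fact exactly how the paper proves its subsequent theorem, equation (\ref{solution}), so your route effectively merges the two results into one. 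What your approach buys: it is shorter, it does not need the paper's simplifying assumption that the eigenvalues of $\boldsymbol{\Gamma}_k$ are distinct, and it makes transparent why $\boldsymbol{\Delta}_k$ must collect the eigenvalues lying \emph{outside} the unit circle (they are the reciprocals of the stable spectrum of $\boldsymbol{\Pi}_k$). What the paper's derivation buys: it is self-contained in that it does not presuppose the validity of (\ref{Xk}) for the lumped $p$-step matrix (which the paper asserts only by citation to Hench--Laub), and the explicit correction term $\G$ exhibits the rate at which the finite-horizon solution converges to the steady-state one. Your closing caveat that the invertibility of $\V_{11k}$ and the property $\P_k\ge\0$ require stabilizability and detectability hypotheses transported through the lumping step is well taken; the paper glosses over the same point in both of its proofs.
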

\begin{proof}
The proof uses similar ideas of \cite{vaughan70,lvs12}.
Since the periodicity of the system, the Riccati equation 
corresponding to (\ref{inverse}) represents any one of
$k \in \{0,1,\ldots, p-1 \}$ equations which
has a sample period increasing by $p$ with the
patent $k, k+p, k+2p, \ldots, k+\ell p, \ldots$.
In the following discussion, 
we consider one Riccati equation and drop the
subscript $k$ to simplify the notation to
$0,  p,  2p, \ldots, \ell p, \ldots$. 
To make the notation simpler, we will drop $p$
and use $\ell$ for this step increment. Assume that the solution has the form
\begin{equation}
\y_{\ell} = \P \x_{\ell}.
\label{assumption}
\end{equation}
Further, we assume for simiplisity that the eigenvalues of 
$\boldsymbol{\Gamma}$ are distinct, therefore, 
$\boldsymbol{\Delta}$ is diagonal. For any integer $\ell \ge 0$, 
let
\begin{equation}
\left[ \begin{array}{c}
\x_{\ell} \\ \y_{\ell}  
\end{array} \right]
= \left[ \begin{array}{cc} 
\V_{11} & \V_{12} \\ \V_{21} & \V_{22}
\end{array} \right]
\left[ \begin{array}{c}
\t_{\ell}  \\ \s_{\ell} 
\end{array} \right],
\label{tmp1}
\end{equation}
from (\ref{inverse}), (\ref{jordan}) and (\ref{tmp1}), we have 
\[
\V \left[ \begin{array}{c}
\t_{\ell}  \\ \s_{\ell}  
\end{array} \right]
=\left[ \begin{array}{c}
\x_{\ell}  \\ \y_{\ell}  
\end{array} \right]
=\boldsymbol{\Gamma} \left[ \begin{array}{c}
\x_{\ell+1}  \\ \y_{\ell+1} 
\end{array} \right]
= \boldsymbol{\Gamma}  \V  \left[ \begin{array}{c}
\t_{\ell+1}  \\ \s_{\ell+1}  
\end{array} \right]
=\V \left[ \begin{array}{cc} 
\boldsymbol{\Delta}  & \0  \\ \0  & \boldsymbol{\Delta}^{-1} 
\end{array} \right]
\left[ \begin{array}{c}
\t_{\ell+1} \\ \s_{\ell+1} 
\end{array} \right],
\]
which is equivalent to
\[
\left[ \begin{array}{c}
\t_{\ell} \\ \s_{\ell} 
\end{array} \right]
= \left[ \begin{array}{cc} 
\boldsymbol{\Delta}  & \0  \\ \0  & \boldsymbol{\Delta}^{-1} 
\end{array} \right]
\left[ \begin{array}{c}
\t_{\ell+1} \\ \s_{\ell+1} 
\end{array} \right].
\]
Hence,
\begin{equation}
\left[ \begin{array}{c}
\t_{\ell} \\ \s_{\ell} 
\end{array} \right]
= \left[ \begin{array}{cc} 
\boldsymbol{\Delta}^{N-\ell}  & \0  \\ \0  & \boldsymbol{\Delta}^{-(N-\ell)} 
\end{array} \right]
\left[ \begin{array}{c}
\t_{N} \\ \s_{N} 
\end{array} \right],
\label{nstep}
\end{equation}
Using the boundary condition (\ref{boundary1}) and (\ref{tmp1}), 
we have
\[
\Q_N (\V_{11 } \t_{N} +  \V_{12} \s_{N} )
=\Q_N \x_N= \y_N=  \V_{21} \t_{N} +  \V_{22} \s_{N} ,
\]
this gives
\[
-(\V_{21} -\Q_N \V_{11}) \t_N =  (\V_{22} -\Q_N \V_{12}) \s_N,
\]
or equivalently
\begin{equation}
\s_N =-(\V_{22} -\Q_N \V_{12} )^{-1}(\V_{21} -\Q_N \V_{11}) \t_N:=\H \t_N.
\label{kmatrix}
\end{equation}
Combining (\ref{nstep}) and (\ref{kmatrix}) yields
\[
\s_{\ell} = \boldsymbol{\Delta}^{-(N-\ell)} \s_N =\boldsymbol{\Delta}^{-(N-\ell)} \H \t_N 
=\boldsymbol{\Delta}^{-(N-\ell)} \H \boldsymbol{\Delta}^{-(N-\ell)} \t_{\ell}:=\G \t_{\ell},
\]
with $\G =\boldsymbol{\Delta}^{-(N-\ell)} \H \boldsymbol{\Delta}^{-(N-\ell)}$. Finally, 
using this relation, (\ref{tmp1}), and 
(\ref{assumption}), we conclude that
\[
\y_{\ell}=\V_{21} \t_{\ell} + \V_{22}\s_{\ell} = (\V_{21}  + \V_{22}\G ) \t_{\ell} =\P\x_{\ell}
=\P (\V_{11} \t_{\ell} + \V_{12}\s_{\ell}) =\P (\V_{11} + \V_{12} \G ) \t_{\ell}
\]
holds for all $\t_{\ell}$, therefore
\[
(\V_{21}  + \V_{22}\G ) =\P (\V_{11} + \V_{12} \G ) 
\]
or
\begin{equation}
\P = (\V_{21}  + \V_{22}\G ) (\V_{11} + \V_{12} \G ) ^{-1}.
\end{equation}
Note that $\G \rightarrow 0$ as $N \rightarrow \infty$. This finishes the proof.
\hfill \qed
\end{proof}

Since the eigen-decomposition is not numerically stable. We
suggest using the Schur decomposition instead. Since $\boldsymbol{\Gamma}_k$ 
is symplectic, Corollary \ref{cor1} claims that there is an
orthogonal matrix $\W_k$ such that
\begin{equation}
\left[ \begin{array}{cc} \W_{11k} &  \W_{12k}  \\
\W_{21k}   &    \W_{22k}
\end{array} \right]^{\Tr}\boldsymbol{\Gamma}_k 
\left[ \begin{array}{cc} \W_{11k} &  \W_{12k}  \\
\W_{21k}   &    \W_{22k}
\end{array} \right]=
\left[ \begin{array}{cc}
\S_{11k} &  \S_{12k}  \\
\0   &    \S_{22k}
\end{array} \right],
\label{decomp1}
\end{equation}
where $\S_{11k}$ is upper-triangular and has all of its eigenvalues outside the unique circle.
We have the main result of the paper as follows.
\begin{theorem}
Let the Schur decomposition of $\boldsymbol{\Gamma}_k$ is given by (\ref{decomp1}).
The solution of the Riccati
equation corresponding to (\ref{inverse}) is given by
\begin{equation}
\P_k =\W_{21k}\W_{11k}^{-1}
\label{solution}
\end{equation}
\end{theorem}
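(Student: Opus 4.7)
The plan is to reduce (\ref{solution}) to the eigendecomposition-based formula $\P_k=\V_{21k}\V_{11k}^{-1}$ already established in the preceding theorem. The key observation is that the first block columns $[\W_{11k}^{\Tr},\W_{21k}^{\Tr}]^{\Tr}$ of the Schur transformer and $[\V_{11k}^{\Tr},\V_{21k}^{\Tr}]^{\Tr}$ of the Jordan transformer span the same $n$-dimensional invariant subspace of $\boldsymbol{\Gamma}_k$, namely the one corresponding to the $n$ eigenvalues lying outside the unit circle. For the Jordan case this is built into (\ref{jordan}); for the Schur case it follows from the block-triangular identity
\[
\boldsymbol{\Gamma}_k \left[\begin{array}{c}\W_{11k}\\ \W_{21k}\end{array}\right] = \left[\begin{array}{c}\W_{11k}\\ \W_{21k}\end{array}\right]\S_{11k},
\]
together with Corollary \ref{cor1}, which guarantees that the diagonal entries of the quasi-upper-triangular block $\S_{11k}$ carry precisely those eigenvalues.

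Since two matrices with the same $n$-dimensional column space differ by an invertible right factor, there exists an invertible $\N\in\R^{n\times n}$ such that $\W_{11k}=\V_{11k}\N$ and $\W_{21k}=\V_{21k}\N$. The previous theorem already asserts that $\V_{11k}$ is invertible, hence $\W_{11k}$ is invertible as well, and
\[
\W_{21k}\W_{11k}^{-1} = (\V_{21k}\N)(\V_{11k}\N)^{-1} = \V_{21k}\V_{11k}^{-1} = \P_k,
\]
which is exactly (\ref{solution}).

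The main obstacle I anticipate is verifying the invertibility of $\W_{11k}$. In the reduction above it is inherited for free from the previous theorem, but if one wished to prove it intrinsically, one would need to invoke the Lagrangian-subspace property of the anti-stable invariant subspace of a symplectic matrix, in the spirit of Laub \cite{laub79}: a non-trivial kernel of $\W_{11k}$ would produce a vector in the invariant subspace whose top block vanishes, and symplecticity would then force the associated solution of the Riccati recursion to violate the positive semi-definiteness conclusion of the previous theorem. A fully self-contained alternative is to mimic the Vaughan/Laub recursion argument of the previous theorem verbatim, substituting the quasi-upper-triangular block $\S_{11k}$ for $\boldsymbol{\Delta}$; the only analytic fact needed is that the spectral radius of $\S_{11k}^{-1}$ is strictly less than one, so $\S_{11k}^{-(N-\ell)}\to \0$ as $N\to\infty$ and the terminal-condition correction vanishes in the limit. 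I would favor the short change-of-basis argument above for clarity and for keeping the exposition aligned with the preceding theorem.
\hfill \qed
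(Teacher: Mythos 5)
Your proposal is correct and takes essentially the same route as the paper: the paper's proof also reduces (\ref{solution}) to the eigendecomposition formula (\ref{eigenSolu}) by noting that $[\W_{11k}^{\Tr},\W_{21k}^{\Tr}]^{\Tr}$ and $[\V_{11k}^{\Tr},\V_{21k}^{\Tr}]^{\Tr}$ span the same anti-stable invariant subspace of $\boldsymbol{\Gamma}_k$, so they differ by an invertible right factor that cancels in the quotient (the paper realizes this factor explicitly as $\T\D^{-1}$ via a similarity $\T^{-1}\S_{11k}\T=\boldsymbol{\Delta}_k$, following Laub's Remark 1, whereas you invoke uniqueness of the spectral subspace directly). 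Your added discussion of the invertibility of $\W_{11k}$ goes slightly beyond what the paper records, but the core argument is the same.
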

\begin{proof}
The proof follows exactly the same argument of \cite[Remark 1]{laub79} and 
it is provided here for completeness. From (\ref{jordan}), we have
\begin{equation}
\boldsymbol{\Gamma}_k
\left[ \begin{array}{c}
\V_{11k} \\ \V_{21k} 
\end{array} \right]
=\left[ \begin{array}{c}
\V_{11k} \\ \V_{21k} 
\end{array} \right]
\boldsymbol{\Delta}_k.
\label{comp1}
\end{equation}
From (\ref{decomp1}), we have
\[
\boldsymbol{\Gamma}_k
\left[ \begin{array}{c}
\W_{11k} \\ \W_{21k} 
\end{array} \right]
=\left[ \begin{array}{c}
\W_{11k} \\ \W_{21k} 
\end{array} \right]
\S_{11k}.
\]
Let $\T$ be an invertiable transformation matrix such that
\[
\T^{-1} \S_{11k} \T = \boldsymbol{\Delta}_k,
\]
then we have
\begin{equation}
\boldsymbol{\Gamma}_k \left[ \begin{array}{c}
\W_{11k} \\ \W_{21k} 
\end{array} \right] \T
= \left[ \begin{array}{c}
\W_{11k} \\ \W_{21k} 
\end{array} \right] \T \T^{-1}  \S_{11k} \T 
=\left[ \begin{array}{c}
\W_{11k} \\ \W_{21k} 
\end{array} \right] \T \boldsymbol{\Delta}_k
\label{comp2}
\end{equation}
Comparing (\ref{comp1}) and (\ref{comp2}) we must have 
\[
\left[ \begin{array}{c}
\W_{11k} \\ \W_{21k} 
\end{array} \right] \T 
= \left[ \begin{array}{c}
\V_{11k} \\ \V_{21k} 
\end{array} \right] \D 
\]
where $\D$ is a diagonal and invertiable matrix. Thus
\[
\W_{21k} \W_{11k}^{-1} = \V_{21k} \D \T^{-1} \T
\D^{-1} \V_{11k}^{-1} =\V_{21k} \V_{11k}^{-1}.
\]
This finishes the proof.
\hfill \qed
\end{proof}

Using either eigen-decomposition or Schur decomposition leads to the solution for (\ref{inverse}). 
But the formula of (\ref{solution}) is more stable than the formula of
(\ref{eigenSolu}) because Schur decomposition is a more stable 
process than eigen-decomposition.

We summarize the algorithm as follows.
\begin{algorithm} {\ } 
\begin{itemize}
\item[Step 0] Data: $\J$, $i_m$, $\Q$, $\R$, altitude of the 
spacecraft, and select sample period. 
\item[Step 1] Calculate $\A_k$ and $\B_k$ using (\ref{AB}-\ref{ABk}).
\item[Step 2] Calculate $\E_k$ and $\F_k$ using (\ref{Ek}-\ref{Fk}).
\item[Step 3] Calculate $\boldsymbol{\Gamma}_k$ using (\ref{Gamma}).
\item[Step 4] Use Schur decomposition (\ref{decomp1}) to get $\W_k$.
\item[Step 5] Calculate $\P_k$ using (\ref{solution}).
\end{itemize}
\label{mainAl}
\end{algorithm}

\section{Simulation test}

The proposed design algorithm has been tested for the following problem. 
Let the spacecraft inertia matrix be
\[
\J=\diag \left( 250, 150, 100 \right) kg \cdot m^2.
\]
The orbital inclination $i_m=57 C^o$, the orbit is circular with the altitude
$657$ km. In view of equation (\ref{period}), the orbital period is $5863$ seconds, and the orbital
rate is $\omega_0=0.0011$ rad/second. Assuming that the total number of samples
taken in one orbit is $100$, then, each sample period is $58.6352$ second.
Select $\Q=\diag(1.5*10^{-9},1.5*10^{-9},1.5*10^{-9},0.001,0.001,0.001)$
and $\R=\diag(2*10^{-3},2*10^{-3},2*10^{-3})$. We have calculated 
and stored $\P_k$ for $k=0,1,2,\ldots,99$ using Algorithm \ref{mainAl}.
Assuming that the initial quaternion error is $(0.01, 0.01, 0.01)$ and the
initial body rate is $(0.00001, 0.00001, 0.00001)$ radians per second,
applying the feedback (\ref{optiSolu}) to the system (\ref{discrete}), the simulated spacecraft
attitude response is given in Figures 1-6.

\begin{figure}[ht]
\centerline{\epsfig{file=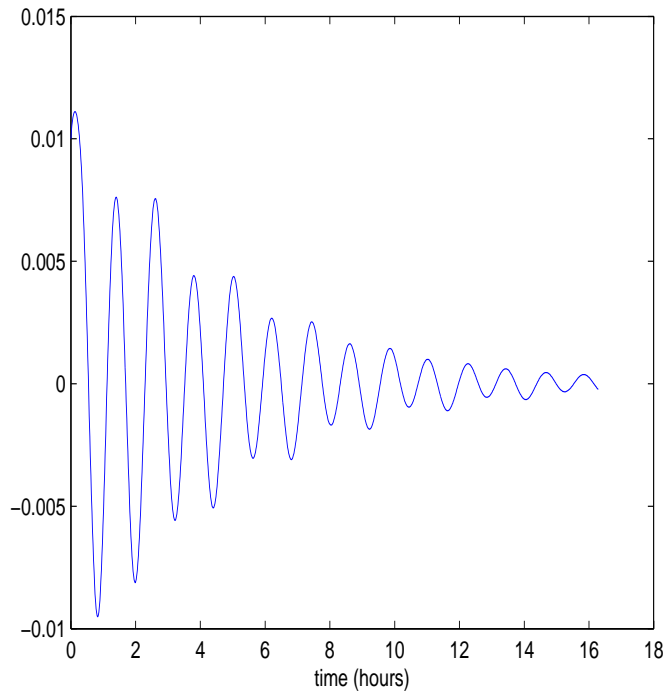,height=10cm,width=10cm}}
\caption{Attitude response $q_1$.}
\label{fig:q1}
\end{figure}

\begin{figure}[ht]
\centerline{\epsfig{file=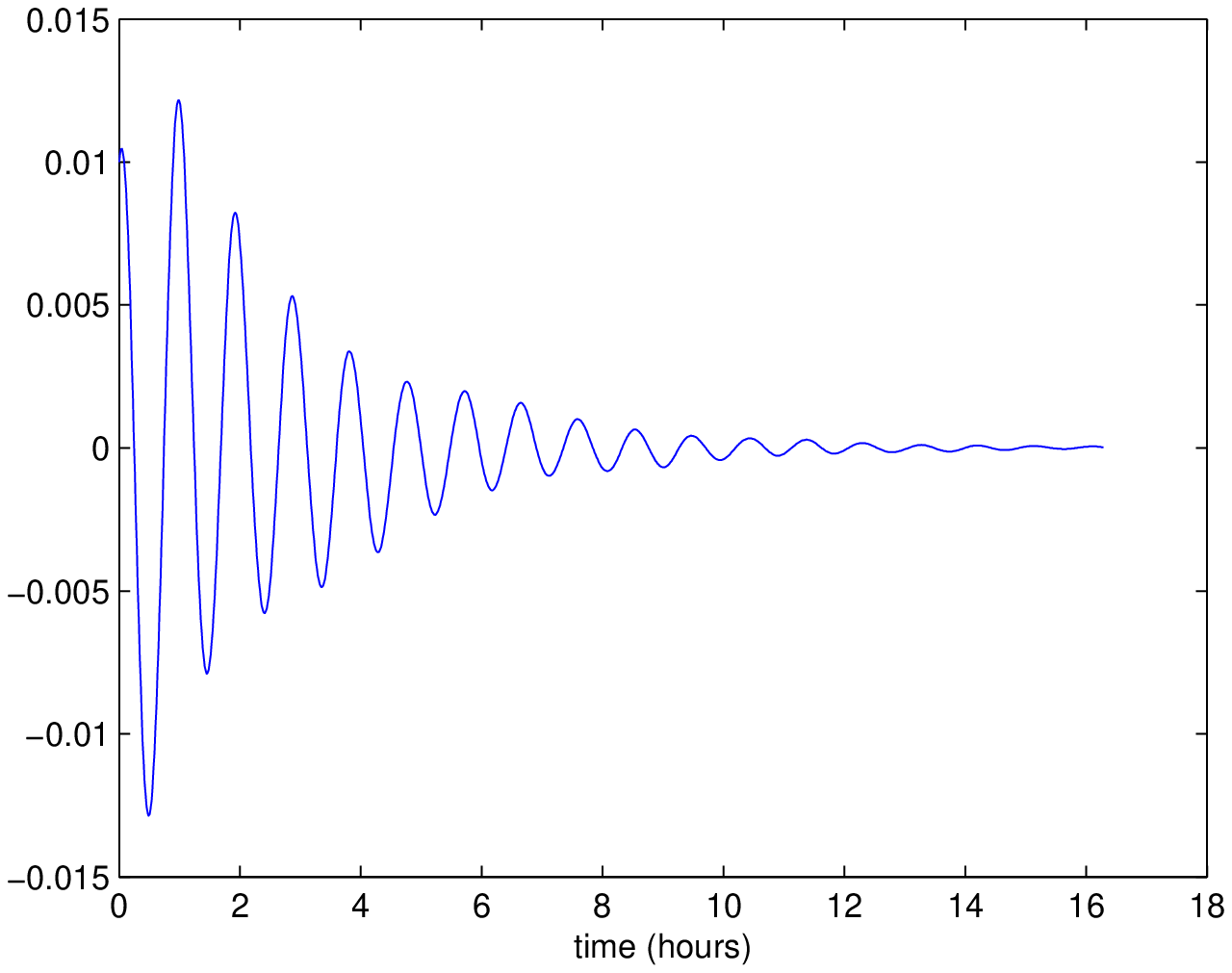,height=10cm,width=10cm}}
\caption{ Attitude response $q_2$.}
\label{fig:q2}
\end{figure}

\begin{figure}[ht]
\centerline{\epsfig{file=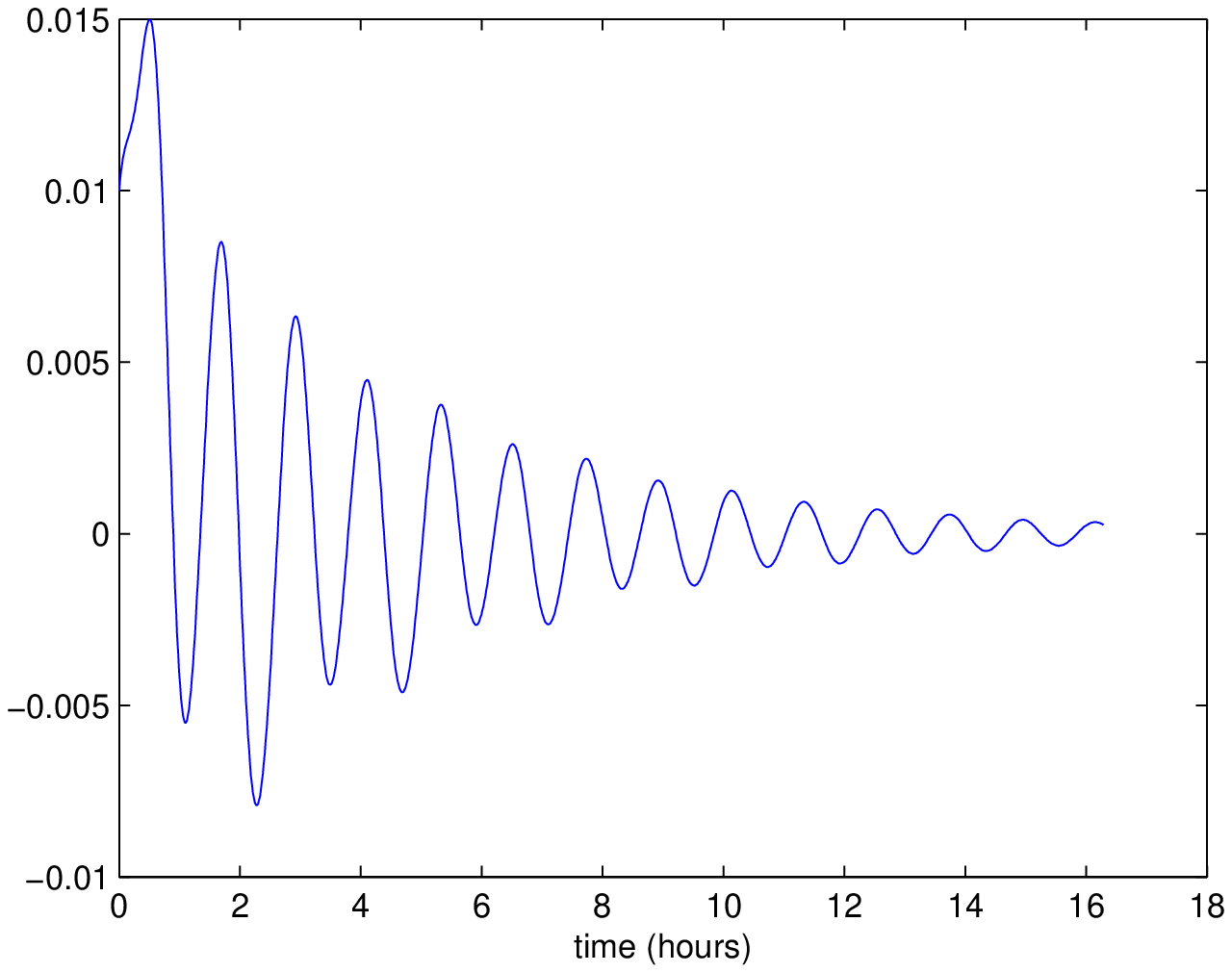,height=10cm,width=10cm}}
\caption{Attitude response $q_3$.}
\label{fig:q3}
\end{figure}

\begin{figure}[ht]
\centerline{\epsfig{file=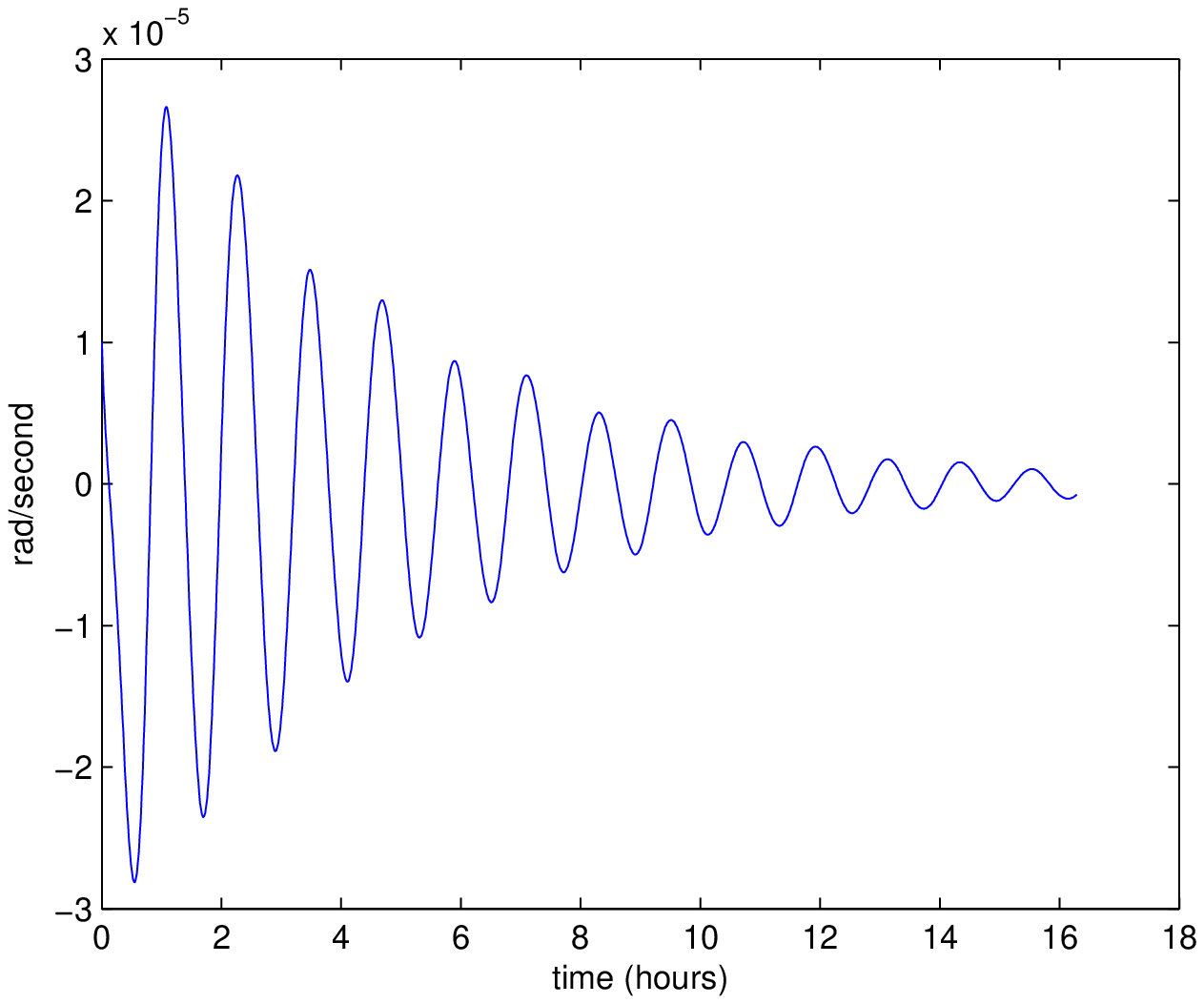,height=10cm,width=10cm}}
\caption{Body rate response $\omega_1$.}
\label{fig:w1}
\end{figure}

\begin{figure}[ht]
\centerline{\epsfig{file=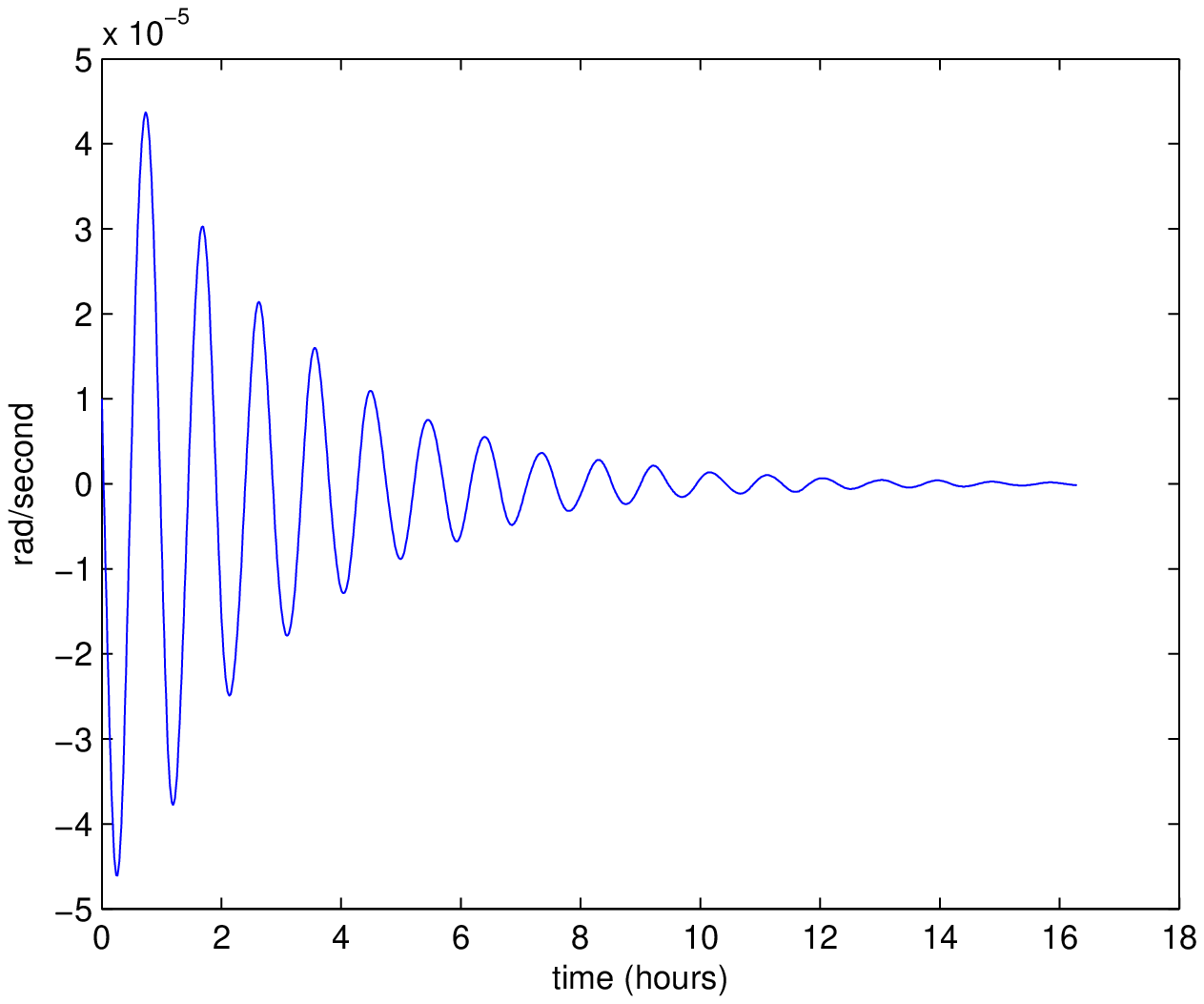,height=10cm,width=10cm}}
\caption{Body rate response $\omega_2$.}
\label{fig:w2}
\end{figure}

\begin{figure}[ht]
\centerline{\epsfig{file=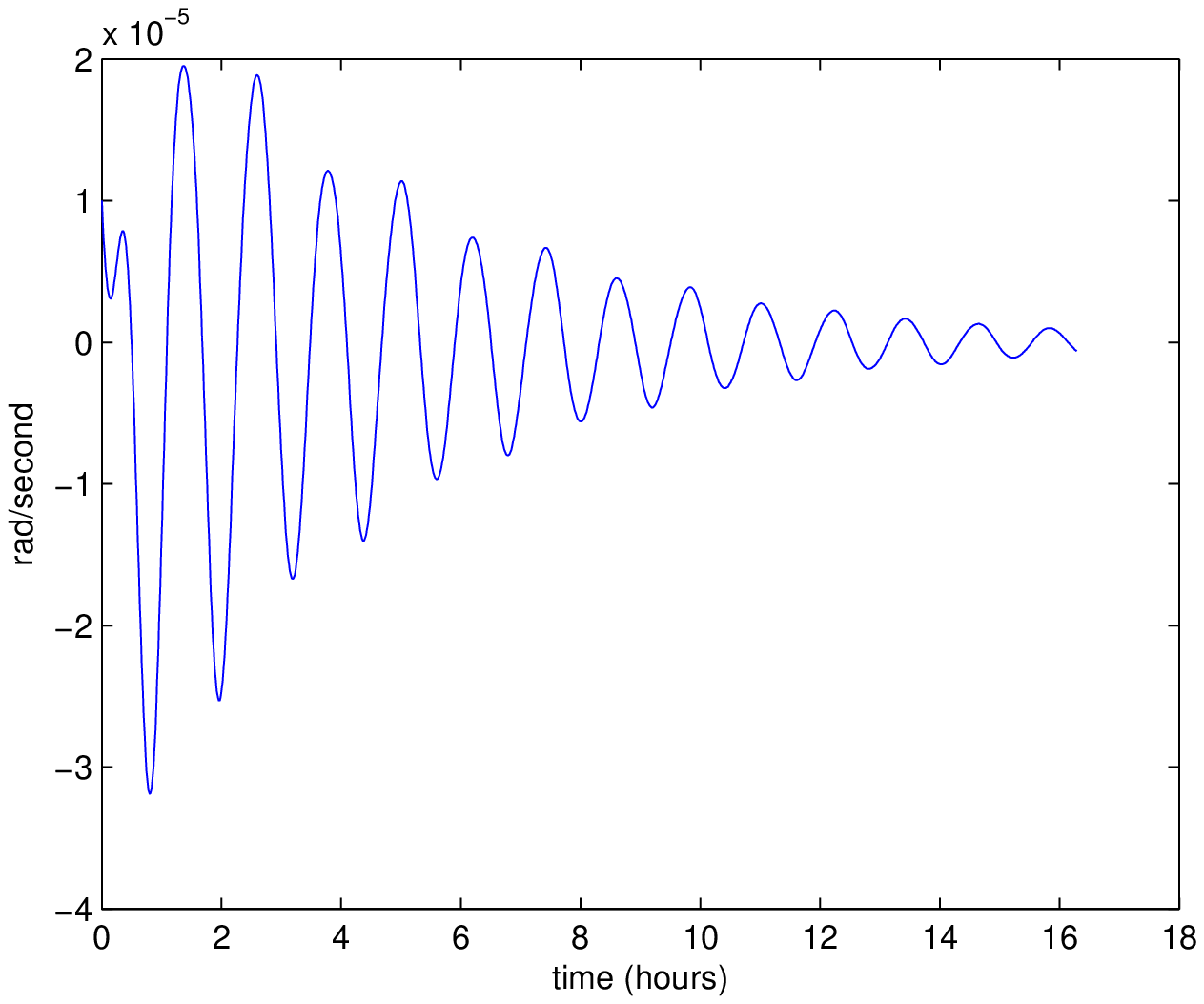,height=10cm,width=10cm}}
\caption{Body rate response $\omega_3$.}
\label{fig:w3}
\end{figure}

The designed controller stabilizes the spacecraft using only magnetic torques.
This shows the effectiveness of the design method. Since this time-varying 
system has a long period $5863$ seconds and the number of samples in 
each period is $100$, this means that using $\boldsymbol{\Gamma}_k$ in (\ref{Gamma})
instead of $\boldsymbol{\Pi}_k$ in (\ref{pi}) saves about $100$ matrix inverses, a significant
saving in the computation comparing to the well-known algorithm
\cite{hl94}! 

\section{Conclusion}
In this paper, we proposed a new algorithm for the 
design of the periodic controller for the spacecraft using only
magnetic torques. The proposed method is more efficient than
existing methods because it makes full use of the  information associated with this particular time-varying system. A simulation 
example is provided to demonstrate the effectiveness and fficiency 
of the algorithm. Although the algorithm is motivated by the 
problem of spacecraft attitude control using only magnetic
torques, it can be used in any time-verying system $(\A,\B(t))$
where only $\B(t)$ is a periodically time-varying matrix.



\end{document}